\newtheorem{tm}{Theorem}[section]
\newtheorem{defin}[tm]{Definition} 
\newtheorem{rk}[tm]{Remark}
\numberwithin{equation}{section}
\begin{document}


\title{Stochastic Solution of Fractional Fokker-Planck Equations with Space-Time-Dependent Coefficients}


\author{ERKAN NANE}
\address{Department of Mathematics and Statistics,
Auburn University,
Auburn, AL 36849 USA}
\email{ezn0001@auburn.edu}

\author{YINAN NI}
\address{Department of Mathematics and Statistics,
Auburn University,
Auburn, AL 36849 USA}
\email{yzn0005@auburn.edu}



\begin{abstract}
This paper develops solutions of fractional Fokker-Planck equations describing subdiffusion of probability densities of stochastic dynamical systems driven by non-Gaussian L\'evy processes, with space-time-dependent drift, diffusion and jump coefficients, thus significantly extends Magdziarz and Zorawik's result in \cite{mazo}. Fractional Fokker-Planck equation describing subdiffusion is solved by our result in full generality from perspective of stochastic representation.
\end{abstract}

\keywords{ Fokker-Plank equation, space-time dependent coefficients, L\'evy process, subdiffusion}
\maketitle


\section{Introduction}

Fractional Fokker-Planck equation has shown its application in diverse scientific areas, including biology \cite{bioapp}, physics \cite{phyapp}, \cite{sokl}, finance \cite{finapp}. For example, in physics, it has been broadly used to describe phenomena related to anomalous diffusion \cite{phyappss}, \cite{phyapps}.    Many different types of fractional Fokker-Planck equation have been solved in terms of probability density function (PDF) of corresponding process \cite{physic}, \cite{mekl}, related researches are also growing rapidly in different branches, e.g., \cite{mesi}, \cite{mena}, \cite{menave}, \cite{menav}, \cite{mina}. Recently, Magdziarz and Zorawik \cite{mazo} provide solution to an extended type of Factional Fokker-Planck equation.

To state the result, let's introduce subordinator $T_\Psi(t)$ with Laplace transform
\begin{equation}
E(e^{-uT_\Psi(t)})=e^{-t\Psi(u)},
\end{equation}
where Laplace exponent
\begin{equation}\label{laplacetransform}
\Psi(u)=\int_0^{\infty}(1-e^{-ux})\nu(dx),
\end{equation}
$\nu$ is L\'evy measure satisfying
\begin{equation}
\int_{\mathbf{R}-\{0\}}\min\{|x|^2,1\}\nu(dx)<\infty
\end{equation} and $\nu((0,\infty))=\infty$.
 Then, the first passenge time process defined as
 \begin{equation}
 S_{\Psi}(t)=\inf\{\gamma>0: T_\Psi(\gamma)>t\}, t\geq 0,
 \end{equation}
 is called the inverse subordinator.

 Define the integro-differential operator $\Phi_t$ as
 \begin{equation}
 \Phi_tf(t)=\frac{\partial}{\partial t} \int_0^t M(t-y)f(y)dy,
 \end{equation}
 where the function $f$ is smooth enough and kernel $M(t)$ is defined by its Laplace transform as
\begin{equation}
\tilde{M}(u)=\int_0^{\infty}e^{-ut}M(t)dt=\frac{1}{\Psi(u)}.
\end{equation}

A stochastic process $X=\{X(t),t\geq 0\}$ is a L\'evy process if (a) $X(0)=0,\ a.s.$, (b) $X$ has independent and stationary increments, (c) $X$ is stochastically continuous in time. $X^-(t)$ is used to denote left limit, $X^-(t)=\lim\limits_{s\rightarrow t-}X(s)$.

By Theorem 1.2.14 and Proposition 1.3.1 of \cite{appl}, a L\'evy process $X$ has characteristics $(b,A,\nu)$, that's,
\begin{equation}
E(e^{iuX(t)})=\exp \left[t \left( ibu-\frac{1}{2}Au^2+\int_{\mathbf{R}-\{0\}}(e^{iuy}-1-iuy\mathbbm{1}_B(y) )\nu(dy) \right) \right],
\end{equation}
where $\nu$ is L\'evy measure. In the remaining part of this paper, for convenience,  we will decompose a L\'evy process $X(t)$ into 3 parts: drift, Brownian motion $B(t)$ and pure jump L\'evy process $L(t)$.

Improving on the methods in the papers \cite{henry} \cite{magd} \cite{maga},   Magdziarz and Zorawik \cite{mazo} proved that the PDF of process $X(t)=Y^-(S_{\Psi}(t))$, where
\begin{equation}
\begin{aligned}
&dY(t)=F(Y^-(t),T^-_\Psi(t))dt+\sigma(Y^-(t),T^-_\Psi(t))dB(t)+E(T^-_\Psi(t))dL(t), t\geq 0,\\
&Y(0)=0, T_\Psi(t)=0,
\end{aligned}
\end{equation}
with $F(x,t),\sigma(x,t),E\in C^2(\mathbf{R}^2)$ satisfying Lipschitz condition, solves fractional Fokker-Planck equation
\begin{equation}\label{magequ}
\begin{aligned}
\frac{\partial q(x,t)}{\partial t}&=\left[-\frac{\partial}{\partial x} F(x,t)+\frac{1}{2}\frac{\partial^2}{\partial x^2}\sigma^2(x,t)\right]\Phi_t q(x,t)\\
&+\int_{\mathbf{R}-\{0\}}\left[ \Phi_t q(r,t)|_{r=x-E(t)y}-\Phi_t q(x,t)+E(t)y\frac{\partial}{\partial x}\Phi_t q(x,t)\mathbbm{1}_B(y)\right] \nu(dy),
\end{aligned}
\end{equation}
with $q(x,0)=\delta(x)$, where $\mathbbm{1}$ is an indicator function, $B=\{y,|y|<1\}$ .

This result extends the following celebrated fractional Fokker-Planck equation introduced by Metzler and Klafter \cite{mekl} in 2000,
\begin{equation}\label{meklequ}
\frac{\partial q(x,t)}{\partial t} = ~_0D_t^{1-\alpha}\left[-\frac{\partial}{\partial x}F(x)+\frac{\sigma^2}{2}\frac{\partial^2}{\partial x^2}\right]q(x,t),
\end{equation}
with $\sigma>0$ and $q(x,0)=\delta(x)$, which describes anomalous diffusion in the presence of an space-dependent force $F(x)$. Note that the operator $~_0D_t^{1-\alpha},\ \alpha\in(0,1)$ is fractional derivative of Riemann-Liouville type \cite{saki},
\begin{equation}
~_0D_t^{1-\alpha}f(t)=\frac{1}{\Gamma(\alpha)}\frac{d}{dt}\int_0^t(t-s)^{\alpha-1}f(s)ds,
\end{equation}
for $f\in C^1([0,\infty))$.
Magdziarz et al. \cite{mawe} showed that the PDF of the subordinated process $X(t)=Y(S_{\alpha}(t))$
solves equation \eqref{meklequ}, where
\begin{equation}
dY(t)=F(Y(t))dt+\sigma dB(t), Y(0)=0.
\end{equation}

Here, let $D_\alpha$ be an $\alpha$-stable subordinator with  Laplace transform $E[e^{-uD_{\alpha}(\gamma)}]=e^{-tu^{\alpha}}$, its inverse $S_{\alpha}(t)$ is defined as
\begin{equation}
S_{\alpha}(t)=\inf\{ \gamma>0: D_{\alpha}(\gamma)>t\}.
\end{equation}

Magdziarz and Zorawik's result \cite{mazo} also extends the following fractional Fokker-Planck equation introduced by Sokolov and Klafter \cite{sokl} in 2009,
\begin{equation}\label{soklequ}
\frac{\partial q(x,t)}{\partial t} = \left[-F(t)\frac{\partial}{\partial x}+\frac{\sigma^2}{2}\frac{\partial^2}{\partial x^2}\right]~_0D_t^{1-\alpha}q(x,t),
\end{equation}
with $\sigma>0$ and $q(x,0)=\delta(x)$, where external force $F(t)$ is time-dependent. Its solution is obtained by PDF of subordinated process $X(t)=Y(S_\alpha(t))$, where
\begin{equation}
dY(t)=F(T_\alpha(t))dt+\sigma dB(t), Y(0)=0.
\end{equation}

One of the main results of this paper is that the PDF of $X(t)=Y^-(S_{\Psi}(t))$, where
\begin{equation}
\begin{aligned}
&dY(t)=F(Y^-(t),T^-_\Psi(t))dt+\sigma(Y^-(t),T^-_\Psi(t))dB(t)+h(Y^-(t),T^-_\Psi(t))dL(t), t\geq 0,\\
&Y(0)=0, T_\Psi(t)=0,
\end{aligned}
\end{equation}
solves the following fractional Fokker-Planck equation, which involves fractional Laplacian operator,
\begin{equation}\label{stable}
\frac{\partial q(x,t)}{\partial t}=\left[-\frac{\partial}{\partial x} F(x,t)+\frac{1}{2}\frac{\partial^2}{\partial x^2}\sigma^2(x,t)-(-\Delta)^{\frac{\alpha}{2}}(\text{sgn}(h(x,t))|h(x,t)|^{\alpha})\right]\Phi_t q(x,t),
\end{equation}
with $q(x,0)=\delta(x)$,
note that $\alpha \in (0,2)$ and $L(t)$ is the $\alpha$-stable  L\'evy process.

Furthermore, in section 3, we extend Magdziarz and Zorawik's result \cite{mazo} by solving
\begin{equation}\label{FFPEI}
\frac{\partial}{\partial t}w(x,t)=\left[-\frac{\partial}{\partial x}(F(x,t)+\frac{\partial^2}{\partial x^2}\frac{1}{2}\sigma^2(x,t)+T^{l+}\right]\Phi_t w(x,t),\\
\end{equation}
with $q(x,0)=\delta(x)$,
where
\begin{equation}
T^{l+} f(x,t)=\int_{\mathbf{R}-\{0\}}\left[ \sum_{k=1}^{\infty}\frac{(-r)^k}{k!}\frac{\partial^k}{\partial y^k}(h(x,t)^k f(x,t) )+\mathbbm{1}_B(r,t)r\frac{\partial}{\partial y}(h(x,t)f(x,t))\right]\nu(dr),
\end{equation}
for any $f(x,t)\in C_0^\infty(R^2)$.

Note that coefficient $E(t)$ of pure jump L\'evy process in \eqref{magequ} is time-dependent, while coefficient $h(x,t)$ of pure jump L\'evy process in \eqref{FFPEI} is space-time-dependent. Thus, \eqref{magequ} is a special case of \eqref{FFPEI} when $h(x,t)$ only depends on time $t$. For more details on this, see remark \ref{reduced}.

In the remaining of this paper, necessary concepts will be given in the Preliminaries section; in the Main Results section, we will solve 3 different fractional Fokker-Planck equations involving operators of $\alpha$-stable, symmetric and general L\'evy processes, respectively, one by one.

\section{Preliminaries}
Let $X=\{X(t),t\geq 0\}$ be a L\'evy process with characteristics $(b,A,\nu)$,  by Theorem 6.7.4 of \cite{appl}, it has infinitesimal generator
\begin{equation}
Af(x)=b\frac{\partial}{\partial x} f(x)+\frac{1}{2}\frac{\partial^2}{\partial x^2} f(x)+ \int_{\mathbf{R}-\{0\}}\left[f(x+y)-f(x)-y\frac{\partial}{\partial x}f(x)\mathbbm{1}_B(y)\right]\nu(dy),
\end{equation}
for each $f\in C_0^2(\mathbf{R}),\ x\in \mathbf{R}$.

The following L\'evy processes and their generators will be used in this paper later.

An $\alpha$-stable L\'evy process $X(t)$ has characteristics $(0,0,\nu)$ and L\'evy symbol $\eta(u)=-|u|^\alpha$, $\alpha\in (0,2)$, see example 3.3.8 of \cite{appl}, and infinitesimal generator
\begin{equation}
Af(x)=\int_{\mathbf{R}-\{0\}}[f(x+y)-f(x)]\nu(dy)=-(-\Delta)^{\alpha/2}f(x),
\end{equation}
where $\nu(dy)= \frac{C_\alpha dy}{|y|^{1+\alpha}},\ C_\alpha =\frac{\alpha}{\Gamma(1-\alpha)}$.

A symmetric L\'evy process $X(t)$ has characteristics $(0,0,\nu)$ where $\nu$ is symmetric L\'evy measure, that's, $\nu(B)=\nu(-B)$, for each $B \subset \mathbf{R}$, and infinitesimal generator
\begin{equation}
Af(x)=\int_{\mathbf{R}-\{0\}}[f(x+y)-f(x)]\nu(dy).
\end{equation}

A general L\'evy process $X(t)$ with characteristics $(0,0,\nu)$ has infinitesimal generator
\begin{equation}
Af(x)=\int_{\mathbf{R}-\{0\}}\left[f(x+y)-f(x)-y\frac{\partial}{\partial x}f(x)\mathbbm{1}_B(y)\right]\nu(dy),
\end{equation}
in fact, such $X(t)$ is a pure jump L\'evy process since drift and Brownian motion parts are gone as $b=A=0$.

Let $G(w)=\nu((w,\infty))$, define the following operator \cite{mazo}
\begin{equation}
\Theta_wg(w)=\int_0^wG(w-z)g(z)dz,
\end{equation}
with Laplace transform of its kernel
\begin{equation}
\tilde{G}(u)=\frac{\Psi(u)}{u}.
\end{equation}

\section{Main Results}

In this section, we will solve fractional Fokker-Planck equations with infinitesimal generator of L\'evy processes with space-time-dependent coefficients for drift, diffusion and jump parts.
We analyze the cases when the jump process is  $\alpha$-stable and symmetric L\'evy process, respectively, before analyzing general L\'evy process, since the results for the cases of $\alpha$-stable and symmetric L\'evy process are more explicit.

\subsection{Fractional Fokker-Planck equation with $\alpha$-stable L\'evy generator}

\begin{defin}\label{stable-sde}

Let $L(t)$ be  $\alpha$-stable  L\'evy process satisfying the following SDE
\begin{equation}\label{stablecon}
dL(t)=\int_B x\tilde{N} (dt,dx)+\int_{B^c} xN(dt,dx),
\end{equation}
where L\'evy symbol $ \eta(u)=\int_{\mathbf{R}-\{0\}} [e^{iux}-1-iux \mathbbm{1}_B(x)]\nu(dx),$ with $$ \nu(dx)=C_{\alpha}\frac{dx}{|x|^{1+\alpha}},\ B=\{x,|x|<1\}.$$\\
\end{defin}

\begin{tm}\label{thmstable}
Suppose that the standard Brownian motion $B(t)$, the $\alpha$-stable  L\'evy process $L(t)$ as defined in definition \ref{stable-sde} and subordinator $T_\Psi(t)$ are independent, where $T_\Psi(t)$ has Laplace exponent $\Psi(u)$ and its inverse $S_\Psi(t)$. Let $Y(t)$ be the solution of the stochastic equation

\begin{equation}\label{solofstable}
\begin{aligned}
dY(t)&=F(Y^-(t),T^-_\Psi(t))dt+\sigma(Y^-(t),T^-_\Psi(t))dB(t)+h(Y^-(t),T^-_\Psi(t))dL(t), t\geq 0,\\
Y(0)&=0,\ T_\Psi(0)=0
\end{aligned}
\end{equation}
where the function $F(x,t),\ \sigma(x,t),\ h(x,t)\in C^2(\mathbf{R}^2)$ satisfy the Lipschitz condition.
Assume that the PDF of the process $(Y(t),T_\Psi(t)),\ p_t(y,z) $ exists. Furthermore, we assume that $\frac{\partial}{\partial t}p_t(y,z),\ \frac{\partial}{\partial y}p_t(y,z)$, $\frac{\partial^2}{\partial y^2}p_t(y,z)$ exist,
\begin{equation}\label{condition11}
\int_0^{t_0}\int_0^{\infty}\left|\frac{\partial}{\partial t}p_s(x,t)\right|dsdt<\infty
\end{equation}
for each $t_0>0$,
\begin{equation}
\int_{x_1}^{x_2}\int_0^{\infty}\left|\frac{\partial}{\partial x}p_s(x,t)\right|dsdx<\infty,
\int_{x_1}^{x_2}\int_0^{\infty}\left|\frac{\partial^2}{\partial x^2}p_s(x,t)\right|dsdx<\infty,
\end{equation}
for each $x_1, x_2\in \mathbf{R}$ and
\begin{equation}
\int_0^{\infty}\int_{\mathbf{R}-\{0\}}\left|p_s(x+y,t)\frac{\text{sgn}(h(x+y,t))|h(x+y,t)|^{\alpha}}{|y|^{1+\alpha}}-p_s(x,t)\frac{\text{sgn}(h(x,t))|h(x,t)|^{\alpha}}{|y|^{1+\alpha}}\right|C_{\alpha}dyds<\infty.
\end{equation}
for each $x\in \mathbf{R}$.Then, the PDF of the process $X(t)=Y^-(S_\Psi(t))$ is the weak solution of fractional Fokker-Planck equation \eqref{stable}.
\end{tm}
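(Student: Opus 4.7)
The plan is to adapt the Magdziarz--Zorawik framework used to establish \eqref{magequ}, the essential new input being that the jump coefficient $h(y,z)$ now depends on the spatial variable as well as on time. The strategy has three stages: (i) derive the forward (Fokker--Planck) equation satisfied by the joint density $p_s(y,z)$ of $(Y(s),T_\Psi(s))$; (ii) represent the density $q(x,t)$ of the subordinated process $X(t)=Y^-(S_\Psi(t))$ as an integral over $p_s(x,z)$ weighted by the overshoot kernel of $T_\Psi$; and (iii) use the Laplace-transform relations $\tilde{M}(u)=1/\Psi(u)$ and $\tilde{G}(u)=\Psi(u)/u$ to collapse the result into the form \eqref{stable}.

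For step (i), I would apply It\^o's formula (in its general semimartingale form, to accommodate the jump part $h\,dL$) to $\varphi(Y(s),T_\Psi(s))$ for test functions $\varphi\in C_c^\infty(\mathbf{R}^2)$, and use independence of $B$, $L$, and $T_\Psi$ to separate the contributions of $Y$ and $T_\Psi$. This produces a weak forward equation whose spatial part is the adjoint of the drift, diffusion, and $\alpha$-stable jump generators with the $y$- and $z$-dependent coefficients $F$, $\sigma$, $h$. The crucial computation is to identify the jump term: the substitution $u=h(y,z)x$ in the measure $C_\alpha|x|^{-1-\alpha}dx$ produces the factor $|h(y,z)|^\alpha$, and since $h$ now depends on $y$, passing to the adjoint in $y$ forces this factor \emph{inside} the nonlocal operator, giving precisely $-(-\Delta)^{\alpha/2}\bigl(\text{sgn}(h)|h|^\alpha\,\cdot\bigr)$ as in \eqref{stable}. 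The sign factor arises because the substitution reverses orientation when $h<0$, so the Jacobian carries $\text{sgn}(h)$ alongside $|h|^\alpha$. For step (ii), I would use the identity relating $Y^-(S_\Psi(t))$ to the overshoot of $T_\Psi$ across level $t$: roughly,
$$q(x,t)=\int_0^\infty\!\!\int_0^t G(t-z)\,p_s(x,z)\,dz\,ds,\qquad G(w)=\nu((w,\infty)),$$
which is a $t$-convolution with kernel $G$ and whose Laplace transform in $t$ therefore carries the factor $\tilde{G}(u)=\Psi(u)/u$.

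For step (iii), I would take the Laplace transform in $t$ of the forward equation for $p_s$, integrate in $s$ from $0$ to $\infty$ using $p_0=\delta$ to eliminate the $\partial_s$ term, and combine the resulting expression with the factor $\Psi(u)/u$ coming from step (ii). Dividing out $\Psi(u)$ -- equivalently multiplying by $\tilde{M}(u)$ -- converts the subordinator contribution into the operator $\Phi_t$ after inverting the Laplace transform, while the spatial operator (drift/diffusion/fractional Laplacian with coefficients $F$, $\sigma^2$, $\text{sgn}(h)|h|^\alpha$) is preserved. Testing against $\phi\in C_c^\infty(\mathbf{R})$ and invoking the integrability hypotheses \eqref{condition11} and the ones that follow to justify Fubini and differentiation under the integral then yields \eqref{stable} in the weak sense. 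The main obstacle I anticipate is the rigorous execution of the jump computation in step (i): the nonconstant coefficient $h(y,z)$ prevents one from simply pulling $|h|^\alpha$ outside the nonlocal operator (as was possible in \eqref{magequ} with $E(t)$), and care is needed to verify, via integration by parts against the Lévy measure, that the correct sign-sensitive form $\text{sgn}(h(x,t))|h(x,t)|^\alpha$ appears inside $(-\Delta)^{\alpha/2}$ and not outside -- a point where the stated integrability condition on the jump kernel is essential.
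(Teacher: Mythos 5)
Your plan follows essentially the same route as the paper's proof: derive the forward equation for the joint density of $(Y,T_\Psi)$ with the adjoint of the stable jump part obtained by the scaling substitution in the measure $C_\alpha|x|^{-1-\alpha}dx$ plus self-adjointness of $(-\Delta)^{\alpha/2}$ (which pushes $\text{sgn}(h)|h|^\alpha$ inside the operator), use the overshoot representation $q(x,t)=\int_0^\infty\int_0^t G(t-z)p_s(x,z)\,dz\,ds$ with $G(w)=\nu((w,\infty))$ (proved in the paper via the compensation formula and the auxiliary function $H_t$), and finish with the Laplace-transform identities $\tilde G(u)=\Psi(u)/u$, $\tilde M(u)=1/\Psi(u)$ identifying $\Theta_t^{-1}$ with $\Phi_t$. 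One caveat: the Jacobian of $x\mapsto h(y,z)x$ against the symmetric stable measure yields $|h|^\alpha$ with no orientation-dependent sign (the symmetry of the measure absorbs the reversal), so your attribution of the $\text{sgn}(h)$ factor to the Jacobian is not a correct justification---but the paper's own change-of-variables step makes the identical assertion, so your sketch matches the published argument.
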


\begin{proof}  This proof uses methods in \cite{henry} and \cite{mazo} with crucial changes.

Equation \eqref{solofstable} can be represented as the following stochastic equations
\begin{equation}
\begin{aligned}
dY(t)&=F(Y^-(t),Z^-(t))dt+\sigma(Y^-(t),Z^-(t))dB(t)\\ &+h(Y^-(t),Z^-(t)) \int_B x\tilde{N} (dt,dx)+ h(Y^-(t),Z^-(t)) \int_{B^c}x  N(dt,dx)\\
dZ(t)&=dT_\Psi(t).
\end{aligned}
\end{equation}

By Theorem 6.7.4 of \cite{appl}, the infinitesimal generator of the process $(Y(t),Z(t))$ that operates on functions $f\in C^2_0(\mathbf{R}^2)$ is
\begin{equation}
\begin{aligned}
\Gamma f(y,z)=&F(y,z)\frac{\partial}{\partial y}f(y,z)+\frac{1}{2}\sigma^2(y,z)\frac{\partial^2}{\partial y^2}f(y,z)\\
+&\int_{\mathbf{R}-\{0\}}\left[f(y+xh(y,z),z)-f(y,z)-\mathbbm{1}_B(x)xh(y,z)\frac{\partial}{\partial y}f(y,z)\right]\nu(dx)\\
+&\int_0^{\infty}[f(y,z+u)-f(y,z)]\mu(du),
\end{aligned}
\end{equation}
where $\mu$ is L\'evy measure of $T_\Psi(t)$.

Decompose $\Gamma=A+T_\alpha$, where
\begin{equation}
\begin{aligned}
Af(y,z)=&F(y,z)\frac{\partial}{\partial y}f(y,z)+\frac{1}{2}\sigma^2(y,z)\frac{\partial^2}{\partial y^2}f(y,z),\\
+&\int_0^{\infty}[f(y,z+u)-f(y,z)]\mu(du)\\
T_\alpha f(y,z)=&\int_{\mathbf{R}-\{0\}}\left[f(y+xh(y,z),z)-f(y,z)-\mathbbm{1}_B(x)xh(y,z)\frac{\partial}{\partial y}f(y,z)\right]\nu(dx).
\end{aligned}
\end{equation}

By setting $E(t)=0$ in (2.17) of \cite{mazo},
 \begin{equation}
 A^+p_t(y,z)=-\frac{\partial}{\partial y}(F(y,z)p_t(y,z))+\frac{\partial^2}{\partial y^2}(\frac{1}{2}\sigma^2(y,z)p_t(y,z))-\frac{\partial}{\partial z}\Theta_z p_t(y,z),
 \end{equation}
 where $A^+$ is the Hermitian adjoint of $A$ .

Next, $T_\alpha^+$ is derived below
\begin{equation}\label{alpha}
\begin{aligned}
T_\alpha f(y,z)=&\int_{\mathbf{R}-\{0\}}[f(y+xh(y,z),z)-f(y,z)]C_{\alpha}\frac{dx}{|x|^{1+\alpha}}\\
=&\text{sgn}(h(y,z))|h(y,z)|^{\alpha}\int_{\mathbf{R}-\{0\}}[f(y+x,z)-f(y,z)]C_{\alpha}\frac{dx}{|x|^{1+\alpha}}\\
=&\text{sgn}(h(y,z))|h(y,z)|^{\alpha}[-(-\Delta)^{\alpha/2}f(y,z)].\\
\end{aligned}
\end{equation}

Note that the second equation is the result of change of variable.

 Since the infinitesimal generator of $\alpha$-stable L\'evy process is self-adjoint,
 \begin{equation}
 \begin{aligned}
 &\int_{\mathbf{R}^2}f(y,z)T_\alpha^+p_t(y,z)dydz\\
 =&\int_{\mathbf{R}^2} p_t(y,z)T_\alpha f(y,z)dydz\\
 =&\int_{\mathbf{R}^2}p_t(y,z)\text{sgn}(h(y,z))|h(y,z)|^{\alpha}[-(-\Delta)^{\alpha/2}f(y,z)]dydz\\
 =&\int_{\mathbf{R}^2}f(y,z) [-(-\Delta)^{\alpha/2}(p_t(y,z)\text{sgn}(h(y,z))|h(y,z)|^{\alpha})]dydz.
 \end{aligned}
 \end{equation}

 That's,
 \begin{equation}
 T_\alpha^+p_t(y,z)=-(-\Delta)^{\alpha/2}(p_t(y,z)\text{sgn}(h(y,z))|h(y,z)|^{\alpha})
 \end{equation}

Since $\Gamma^+=A^++T_\alpha^+$ and $\frac{\partial}{\partial t}p_t(y,z)=\Gamma^+p_t(y,z)$, we have
\begin{equation}\label{alphaopcon}
\begin{aligned}
\frac{\partial}{\partial t}p_t(y,z)=&\Gamma^+p_t(y,z)=A^+p_t(y,z)+T_\alpha^+p_t(y,z)\\
=&-\frac{\partial}{\partial y}(F(y,z)p_t(y,z))+\frac{\partial^2}{\partial y^2}(\frac{1}{2}\sigma^2(y,z)p_t(y,z))-\frac{\partial}{\partial z}\Theta_z p_t(y,z)\\
&-(-\Delta)^{\alpha/2}(p_t(y,z)\text{sgn}(h(y,z))|h(y,z)|^{\alpha}).
\end{aligned}
\end{equation}

Next, we establish the relationship between $q(x,t)$ and $p_t(y,z)$, the probability density functions of $X(t)$ and $(Y(t), Z(t))$, respectively. Let $w$ denote a random path of stochastic process, for each fixed interval $I$, define indicator function
\begin{equation}
    \mathbbm{1}_I(x)=
   \begin{cases}
   1 &\mbox{if $x\in I$,}\\
   0 &\mbox{otherwise.}
   \end{cases}
  \end{equation}
and the auxiliary function \cite{henry}
\begin{equation}
    H_t(s,w,u)=
   \begin{cases}
   \mathbbm{1}_I(Y^-(s,w)) &\mbox{if $Z^-(s,w)\leq t \leq Z^-(s,w)+u$,}\\
   0 &\mbox{otherwise.}
   \end{cases}
  \end{equation}
Then, we get
\begin{equation}\label{second1}
\int_I q(x,t)dx=E[\mathbbm{1}_I(X(t,w))].
\end{equation}

Let $\Delta Z(t,w)=Z(t,w)-Z^-(t,w)$ and $s=S_\Psi(t,w)=\inf\{\tau>0:T_\Psi(\tau)>t\}$, then $H_t(S_\Psi(t),w,\Delta Z(t,w))=\mathbbm{1}_I(X(t,w))$. We can prove this as follows, by definition of $H_t(s,w,u)$,
\begin{equation}
H_t(S_\Psi(t),w,\Delta Z(t,w))=
   \begin{cases}
   \mathbbm{1}_I(Y^-(S_\Psi(t,w),w)) &\mbox{if $Z^-(S_\Psi(t,w),w)\leq t \leq Z(S_\Psi(t,w),w)$,}\\
   0 &\mbox{otherwise.}
   \end{cases}
\end{equation}
Since $T_\Psi(t,w)$ is a subordinator, $T^-_\Psi(S_\Psi(t,w),w)\leq t \leq T_\Psi(S_\Psi(t,w),w)$ is always true, thus
\begin{equation}
H_t(S_\Psi(t,w),w,\Delta Z(t,w))=\mathbbm{1}_I(Y^-(S_\Psi(t,w),w)).
\end{equation}

To avoid $Z(t,w)$ being a compound Poisson process, we set $\nu([0,\infty)=\infty$, see Remark 27.3 and 27.4 of \cite{sato}, thus, jumping times of $Z(t,w)$ are dense in $[0,\infty]$ almost surely, see Theorem 21.3 of \cite{sato}. Then, we can derive that $$H_t(s,w,\Delta Z(t,w))=0,\ if\ s\neq S_\Psi(t,w)$$
Since if $s<S_\Psi(t,w)$, as $Z(t)=T_\Psi(t)$ is a subordinator, $Z(s)<Z^-(S_\Psi(t,w))\leq t.$
Similarly, if $s>S_\Psi(t,w)$, then $Z(s)>Z(S_\Psi(t,w))\geq t$. In both cases, $H_t(s,w,\Delta Z(t,w))=0$.

Hence,
\begin{equation}\label{second2}
\mathbbm{1}_I(X(t,w))=\sum\limits_{s>0}H_t(s,w,\Delta Z(t,w)).
\end{equation}
By Compensation Formula in Ch. XII, Proposition (1.10) of \cite{yor},
\begin{equation}\label{second3}
\begin{aligned}
 &E\left[\sum\limits_{s>0}H_t(s,w,\Delta Z(s,w))\right]= E\left[\int_0^{\infty} \int_0^{\infty} H_t(s,w,u)\nu(du)ds\right]\\
=&E\left[\int_0^{\infty}\int_0^{\infty}\mathbbm{1}_I(Y(s,w))\mathbbm{1} _{[Z(s-,w),Z(s-w)+u]}(t)\nu(du) ds\right]\\
=&E\left[\int_0^{\infty} \mathbbm{1}_I(Y(s,w))\int_0^{\infty}\mathbbm{1} _{[t-z,\infty]}(u)\mathbbm{1} _{[0,t]}(z)\nu(du) ds\right]\\
=&E\left[\int_0^{\infty} \mathbbm{1}_I(Y(s,w))\mathbbm{1} _{[0,t]}(Z(s,w))\nu(t-Z(s,w),\infty) ds\right]\\
=&E\left[\int_0^{\infty} \mathbbm{1}_I (Y(s,w))\mathbbm{1} _{[0,t]}(Z(s,w))G(t-Z(s,w)ds\right]\\
=&\int_I \int_0^{\infty} \int_0^t G(t-z)p_s(y,z)dzdsdy\\
=&\int_I \int_0^{\infty} \Theta_t p_s(y,z)dsdy.
\end{aligned}
\end{equation}
By \eqref{second1}, \eqref{second2} and \eqref{second3}, we have
\begin{equation}
\int_I q(x,t)dx=\int _I\int_0^{\infty} \Theta_t p_s(y,t)dsdy.
\end{equation}
By the arbitrariness of interval $I$,
\begin{equation}\label{qintp}
q(x,t)=\int_0^{\infty} \Theta_t p_s(y,t)ds.
\end{equation}

Next we claim that
\begin{equation}\label{exchange}
\frac{\partial}{\partial t}\Theta_t p_s(x,t)=\Theta_t \frac{\partial}{\partial t} p_s(x,t).
\end{equation}
To see this, let $P_s(x,u)$ and $G(u)$ be the Laplace transform ($t \to u$) of $p_s(x,t)$ and $g(t)$, respectively.
Then Laplace transform of $\frac{\partial}{\partial t}\Theta_t p_s(x,t)$ is given as
\begin{equation}
\begin{aligned}
\mathcal{L}\left[\frac{\partial}{\partial t}\Theta_t p_s(x,t)\right]=&u\mathcal{L}[\Theta_t p_s(x,t)]-\Theta_0 p_s(x,0)\\
=&u\mathcal{L}\left[\int_0^t G(t-z)p_s(x,z)dz\right]\\
=&uG(u)P_s(x,u).
\end{aligned}
\end{equation}
On the other hand, since $T_\Psi(0)=0$ a.s. , Laplace transform of $\Theta_t \frac{\partial}{\partial t} p_s(x,t)$ is as below,
\begin{equation}
\begin{aligned}
\mathcal{L}\left[\Theta_t \frac{\partial}{\partial t} p_s(x,t)\right]=&\mathcal{L}\left[\int_0^t G(t-z) \frac{\partial}{\partial z}p_s(x,z)dz\right]\\
=&G(u)\mathcal{L}\left[\frac{\partial}{\partial t}p_s(x,t)\right]\\
=&G(u)\{ uP_s(x,u)-p_s(x,0) \}\\
=&uG(u)P_s(x,u).
\end{aligned}
\end{equation}

Notice that
\begin{equation}
\int_0^{t_0}G(u)du=\int_0^{t_0}\int_{(u,\infty)}\nu(dw)du=\int_{(0,\infty)}\min(w,t_0)\nu(dw)=K<\infty,
\end{equation}
by assumption \eqref{condition11} and \eqref{exchange}, we derive that
\begin{equation}
\begin{aligned}
\int_0^{t_0}\int_0^{\infty}\left|\frac{\partial}{\partial t}\Theta_t p_s(x,t)\right|dsdt=&\int_0^{t_0}\int_0^{\infty}\left|\Theta_t \frac{\partial}{\partial t}p_s(x,t)\right|dsdt\\
=&\int_0^{t_0}\int_0^{\infty}\left|\int_0^t G(t-u) \frac{\partial}{\partial z}p_s(x,z)\right|dsdt\\
\leq & \int_0^{t_0}\int_0^{\infty}\int_0^t G(t-u) \left|\frac{\partial}{\partial z}p_s(x,z)\right|dudsdt\\
=&\int_0^{t_0}\int_0^{\infty}\int_u^t G(t-u) \left|\frac{\partial}{\partial z}p_s(x,z)\right|dtdsdu\\
=&\int_0^{t_0}\int_0^{\infty}\left|\frac{\partial}{\partial z}p_s(x,z)\right| \int_u^t G(t-u) dtdsdu\\
\leq& K \int_0^{t_0}\int_0^{\infty}\left|\frac{\partial}{\partial z}p_s(x,z)\right|dsdu\\
<&\infty
\end{aligned}
\end{equation}

Thus, we can put differentiation on both side of \eqref{qintp} and move it inside the integral on the righthand side as below,
\begin{equation}\label{equ1}
\frac{\partial}{\partial t} q(x,t)=\int_0^\infty \frac{\partial}{\partial t}\Theta_t p_s(x,t)ds.
\end{equation}

Next, we claim that
\begin{equation}\label{equ2}
\int_0^\infty p_s(x,t)ds=\Phi_tq(x,t).
\end{equation}

By Fubini theorem,
\begin{equation}
\begin{aligned}
q(x,t)=&\int_0^\infty \Theta_t p_s(x,t)ds=\int_0^\infty \int_0^t G(t-z)p_s(x,z)dzds\\
=&\int_0^t  G(t-z)\int_0^\infty p_s(x,z)dsdz=\Theta_t\int_0^\infty p_s(x,t)ds,
\end{aligned}
\end{equation}
thus,
\begin{equation}
\int_0^\infty p_s(x,t)ds=\Theta_t^{-1}q(x,t).
\end{equation}
To prove $\Theta _t^{-1}=\Phi_t$, let $\tilde{ M}(u)$, $\tilde{G}(u)$ and $\tilde{Q}(x,u)$ be the Laplace transform of $M(t)$, $G(t)$ and $q(x,t)$, respectively.
Since, by \eqref{laplacetransform}
\begin{equation}
\int_0^\infty e^{-ut}G(t)dt=\int_0^\infty \int_{(t,\infty)}e^{-ut}\nu(ds)dt=\int_0^\infty \frac{1-e^{us}}{u}\nu(ds)=\frac{\Psi(u)}{u}, \\
\end{equation}
and
\begin{equation}
\mathcal{L}[q(x,z)]=\mathcal{L}[G(t)]\mathcal{L}[\Theta _t^{-1}q(x,t)],
\end{equation}
we have
\begin{equation}
\begin{aligned}
\mathcal{L}[\Theta _t^{-1}q(x,t)]=&\frac{\mathcal{L}[q(x,z)]}{\mathcal{L}{[G(t)]}}=\frac{\tilde{Q}(x,u)}{\tilde{G}(u)}=u\frac{\tilde{Q}(x,u)}{\Psi(u)}.
\end{aligned}
\end{equation}
Also
\begin{equation}
\begin{aligned}
\mathcal{L}[\Phi _t q(x,t)]=&\mathcal{L}\left[\frac{d}{dt}\int_0^t M(t-y)q(x,y)dy\right]\\
=&u\mathcal{L}\left[\int_0^t M(t-y)q(x,y)dy \right]-0\\
=&u\tilde{M}(u)\tilde{Q}(x,u)=u\frac{\tilde{Q}(x,u)}{\Psi(u)},
\end{aligned}
\end{equation}
this shows $\Phi_t q(x,t)=\Theta_t^{-1}q(x,t)$, hence $\int_0^\infty p_s(x,t)ds=\Phi_tq(x,t)$.

Since $\lim\limits_{s\rightarrow \infty} p_s(x,t)=0$ and $p_0(x,t)=\mathbbm{1}_{(0,0)}(x,t)$, by \eqref{alphaopcon}, \eqref{equ1} and \eqref{equ2},
\begin{equation}
\begin{aligned}
\frac{\partial}{\partial t} q(x,t)=&\int_0^{\infty} \left[ \frac{\partial^2}{\partial x^2}(\frac{1}{2}\sigma^2(x,t)p_s(x,t))-\frac{\partial}{\partial x}(F(x,t)p_s(x,t))\right.\\
&\left. -(-\Delta)^{\alpha/2}(p_s(x,t)(\text{sgn}(h(x,t))|h(x,t)|^{\alpha}))-\frac{\partial}{\partial s}p_s(x,t)\right]ds\\
=&\int_0^{\infty}\left[ \frac{\partial^2}{\partial x^2}(\frac{1}{2}\sigma^2(x,t)p_s(x,t))-\frac{\partial}{\partial x}(F(x,t)p_s(x,t))\right]ds\\
&+\int_0^{\infty}\int_{\mathbf{R}-\{0\}}\left(p_s(x+y,t)\frac{\text{sgn}(h(x+y,t))|h(x+y,t)|^{\alpha}}{|y|^{1+\alpha}}\right.\\
&\left.-p_s(x,t)\frac{\text{sgn}(h(x,t))|h(x,t)|^{\alpha}}{|y|^{1+\alpha}}\right)C_{\alpha}dyds\\
=&\int_0^{\infty}\left[\frac{\partial^2}{\partial x^2}(\frac{1}{2}\sigma^2(x,t)p_s(x,t))-\frac{\partial}{\partial x}(F(x,t)p_s(x,t))\right]ds\\
&+\int_{\mathbf{R}-\{0\}}\left[ \int_0^{\infty}p_s(x+y,t)ds\frac{\text{sgn}(h(x+y,t))|h(x+y,t)|^{\alpha}}{|y|^{1+\alpha}}\right.\\
&\left.-\int_0^{\infty}p_s(x,t)ds\frac{\text{sgn}(h(x,t))|h(x,t)|^{\alpha}}{|y|^{1+\alpha}}\right]C_{\alpha}dy\\
=&\frac{\partial^2}{\partial x^2}\left(\frac{1}{2}\sigma^2(x,t)\int_0^{\infty}p_s(x,t)ds\right)-\frac{\partial}{\partial x}\left(F(x,t)\int_0^{\infty}p_s(x,t)ds\right)\\
&-(-\Delta)^{\alpha/2}\left((\text{sgn}(h(x,t))|h(x,t)|^{\alpha})\int_0^{\infty}p_s(x,t)ds\right)\\
=&\frac{\partial^2}{\partial x^2}(\frac{1}{2}\sigma^2(x,t)\Phi_t q(x,t))-\frac{\partial}{\partial x}(F(x,t)\Phi_t q(x,t))\\
&-(-\Delta)^{\alpha/2}((\text{sgn}(h(x,t))|h(x,t)|^{\alpha})\Phi_t q(x,t)).
\end{aligned}
\end{equation}

In summary,
\begin{equation}
\frac{\partial}{\partial t} q(x,t)=\left[\frac{1}{2}\frac{\partial}{\partial x^2}\sigma^2(x,t)-\frac{\partial}{\partial x}F(x,t)-(-\Delta)^{\alpha/2}(\text{sgn}(h(x,t))|h(x,t)|^{\alpha}) \right]\Phi_t q(x,t)
\end{equation}
\end{proof}

\subsection{Fractional Fokker-Planck equation with symmetric L\'evy generator}

As is known, $\alpha$-stable L\'evy process is a special case of symmetric L\'evy process, this subsection solves fractional Fokker-Planck equation associated with symmetric L\'evy process with space-time-dependent coefficient, which extends result of previous subsection.
\begin{defin}\label{definitionofsymmetric}
Let $L(t)$ be a $symmetric \ L\acute{e}vy \  process$ satisfying the following SDE
\begin{equation}
dL(t)=\int_B x\tilde{N} (dt,dx)+\int_{B^c} xN(dt,dx)
\end{equation} where
$L\acute{e}vy\ symbol\ \eta(u)=\int_{\mathbf{R}} [cos(u,x)-1]\nu(dx)$ and $\nu$ is a symmetric $L\acute{e}vy $ measure.\\
\end{defin}

\begin{tm}\label{thmsymmetric}
Suppose that the standard Brownian motion $B(t)$, the symmetric  L\'evy process $L(t)$ as defined in definition  \ref{definitionofsymmetric} and subordinator $T_\Psi(t)$ are independent, where $T_\Psi(t)$ has Laplace exponent $\Psi(u)$ and its inverse $S_\Psi(t)$.
Let $Y(t)$ be the solution of the stochastic equation
\begin{equation}
\begin{aligned}
dY(t)&=F(Y^-(t),T^-_\Psi(t))dt+\sigma(Y^-(t),T^-_\Psi(t))dB(t)+h(Y^-(t),T^-_\Psi(t))dL(t), t\geq 0,\\
Y(0)&=0,\ T_\Psi(0)=0
\end{aligned}
\end{equation}
where the function $F(x,t),\ \sigma(x,t),\ h(x,t)\in C^2(\mathbf{R}^2)$ satisfy the Lipschitz condition.
Assume that the PDF of the process $(Y(t),T_\Psi(t)),\ p_t(y,z) $ exists. Furthermore, we assume that $\frac{\partial}{\partial t}p_t(y,z),\ \frac{\partial}{\partial y}p_t(y,z)$, $\frac{\partial^2}{\partial y^2}p_t(y,z)$ exist,
\begin{equation}
\int_0^{t_0}\int_0^{\infty}\left|\frac{\partial}{\partial t}p_s(x,t)\right|dsdt<\infty
\end{equation}
for each $t_0>0$,
\begin{equation}
\int_{x_1}^{x_2}\int_0^{\infty}\left|\frac{\partial}{\partial x}p_s(x,t)\right|dsdx<\infty,
\int_{x_1}^{x_2}\int_0^{\infty}\left|\frac{\partial^2}{\partial x^2}p_s(x,t)\right|dsdx<\infty,\\
\end{equation}
for each $x_1, x_2\in \mathbf{R}$ and
\begin{equation}
\int_0^{\infty}\int_{\mathbf{R}-\{0\}}|p_s(x+r,t)-p_s(x,t)|\nu'(dr)ds<\infty.
\end{equation}
for each $x\in \mathbf{R}$, where $\nu'(B)=\nu(\{x;xh(y,z)\in B\})$.

Then, the PDF of the process $X(T)=Y^-(S_\Psi(t))$ is the weak solution of fractional Fokker-Planck equation
\begin{equation}\label{symmetricFFPE}
\frac{\partial q(x,t)}{\partial t}=\left[-\frac{\partial}{\partial x} F(x,t)+\frac{1}{2}\frac{\partial^2}{\partial x^2}\sigma^2(x,t) + T^s\right]\Phi_t q(x,t),
\end{equation}
with $q(x,0)=\delta(x)$,
where
\begin{equation}
T^sf(x,t)=\int_{\mathbf{R}-\{0\}}[f(x+r,t)-f(x,t)]\nu'(dr),
\end{equation}
for any $f(x,t)\in C_0^2(\mathbf{R}^2)$.
\end{tm}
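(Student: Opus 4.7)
The plan is to mirror the proof of Theorem~\ref{thmstable} line by line, adapting only the step where the specific form of the jump kernel enters. Set $Z(t)=T_\Psi(t)$ and consider the two-dimensional Markov process $(Y(t),Z(t))$. By Theorem 6.7.4 of \cite{appl} its infinitesimal generator $\Gamma$ splits as $\Gamma=A+J$, where $A$ is the drift--diffusion--subordinator part whose adjoint $A^+$ is already supplied by \cite{mazo} (obtained by setting $E\equiv 0$ in (2.17) there), and
\[
Jf(y,z)=\int_{\mathbf{R}-\{0\}}\left[f(y+xh(y,z),z)-f(y,z)\right]\nu(dx);
\]
the drift-compensation term on $B$ is absent because $\nu$ is symmetric.

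The central computation is the Hermitian adjoint $J^+$. Performing the change of variable $r=xh(y,z)$ recasts
\[
Jf(y,z)=\int_{\mathbf{R}-\{0\}}\left[f(y+r,z)-f(y,z)\right]\nu'(dr),
\]
where $\nu'$ is the image of $\nu$ under the odd map $x\mapsto xh(y,z)$ and is therefore symmetric. Pairing with a test function $g\in C_0^2(\mathbf{R}^2)$, exploiting the symmetry $\nu'(dr)=\nu'(-dr)$, and invoking Fubini---legitimized by the integrability hypothesis on $|p_s(x+r,t)-p_s(x,t)|\nu'(dr)$---identifies $J^+p_s$ with $T^s$ acting on $p_s(\cdot,z)$, in direct analogy with the identification $T_\alpha^+p_s=-(-\Delta)^{\alpha/2}(\text{sgn}(h)|h|^\alpha p_s)$ obtained in the previous proof.

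With $\partial_t p_t = A^+p_t+J^+p_t$ in hand, the remainder of the argument is a rerun of the $\alpha$-stable case. The compensation formula (Ch.~XII Prop.~1.10 of \cite{yor}) applied to the auxiliary indicator $H_t$ gives $q(x,t)=\int_0^\infty \Theta_t p_s(x,t)\,ds$; matching Laplace transforms shows $\Theta_t^{-1}=\Phi_t$, whence $\int_0^\infty p_s(x,t)\,ds=\Phi_t q(x,t)$; and the hypotheses on $\partial_t p_s$, $\partial_x p_s$, $\partial_{xx} p_s$ permit commuting $\partial_t$ with $\Theta_t$ and with $\int_0^\infty ds$, exactly as in the inequalities leading up to \eqref{equ1}. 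Assembling these ingredients and pulling the $s$-integral inside each spatial operator yields equation~\eqref{symmetricFFPE}, with $q(x,0)=\delta(x)$ forced by $Y(0)=0$, $T_\Psi(0)=0$.

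The main obstacle is the adjoint step. In the $\alpha$-stable setting the scaling identity $T_\alpha f=\text{sgn}(h)|h|^\alpha\cdot[-(-\Delta)^{\alpha/2}]f$ factors the operator and lets $(T_\alpha)^+$ be read off from self-adjointness of $(-\Delta)^{\alpha/2}$ combined with self-adjointness of multiplication; no such factorization is available for a general symmetric Lévy kernel. The substitute is to work directly with the image measure $\nu'$ and invoke its symmetry in $r$; the state-dependence of $\nu'$ through $h(y,z)$ is absorbed into the weak formulation of $T^s$ given in the theorem statement, and the hypothesis on $\int_0^\infty\!\!\int_{\mathbf{R}-\{0\}}|p_s(x+r,t)-p_s(x,t)|\nu'(dr)\,ds$ is exactly the input needed to justify the Fubini swap used in the $s$-integration of $J^+p_s$.
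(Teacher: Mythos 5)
Your proposal is correct and follows essentially the same route as the paper's proof: identify the jump part of the generator, use the symmetry of the image measure $\nu'$ to obtain the adjoint $T^{s+}=T^s$, and then rerun the compensation-formula and Laplace-transform steps of Theorem \ref{thmstable} to pass from $\partial_t p_t=A^+p_t+T^{s+}p_t$ to \eqref{symmetricFFPE}. The only cosmetic difference is that the paper packages your change of variable $r=xh(y,z)$ as the L\'evy symbol of the rescaled noise $L_h(t)=L(t)h(Y(t),Z(t))$ (citing \cite{sato} and \cite{appl}) before invoking self-adjointness, whereas you substitute directly in the jump integral; the content is the same.
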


\begin{proof} We follow the same steps as in the proof of Theorem \ref{thmstable}, and modify the part related to $T_\alpha$.
Let  $L(t)$ be a symmetric L\'evy process as defined in definition \ref{definitionofsymmetric}, it has self-adjoint infinitesimal generator
\begin{equation}
(Tf)(y)=\int_{\mathbf{R}-\{0\}}[f(y+x)-f(y)]\nu(dx)
\end{equation}
for each $f\in C^2_0(R)$, and L\'evy symbol
\begin{equation}
\eta_l(u)=\int_{\mathbf{R}-\{0\}}[cos(u,x)-1]\nu(dx).
\end{equation}

Define $L_h(t)=L(t)h(Y(t),Z(t))$, since theorem \ref{thmsymmetric} involves $h(Y(t),T_\Psi(t))dL(t)$ instead of just $dL(t)$, by Proposition 11.10 of \cite{sato} and Corollary 3.4.11 of \cite{appl}, $L_h(t)$ has L\'evy symbol
\begin{equation}
\eta_{l_h}(u)=\int_{\mathbf{R}-\{0\}}[cos(u,x)-1]\nu'(dx),
\end{equation}
also $\nu'$ is symmetric, hence $L_h$ has self-adjoint generator
\begin{equation}
T^s f(y,z)=\int_{\mathbf{R}-\{0\}} [f(y+x,z)-f(y,z)]\nu'(dx).
\end{equation}

It follows that
\begin{equation}
\begin{aligned}
\int_{\mathbf{R}^2}p_t(y,z)T^sf(y,z)dydz&=\int_{\mathbf{R}^2}f(y,z)T^sp_t(y,z)dydz\\
&=\int_{\mathbf{R}^2}f(y,z)\int_{\mathbf{R}-\{0\}}[p_t(y+x,z)-p_t(y,z)]\nu'(dx)dydz,\\
\end{aligned}
\end{equation}
so,
\begin{equation}
T^{+}_s p_t(y,z)=\int_{\mathbf{R}-\{0\}}[p_t(y+x,z)-p_t(y,z)]\nu'(dx)
\end{equation}

Since $\Gamma^+=A^++T^{s+}$ and $\frac{\partial}{\partial t}p_t(y,z)=\Gamma^+p_t(y,z)$, we have
\begin{equation}
\begin{aligned}
\frac{\partial}{\partial t}p_t(y,z)&=\Gamma^+p_t(y,z)\\
=&A^+p_t(y,z)+T^{s+}p_t(y ,z)\\
=&-\frac{\partial}{\partial y}(F(y,z)p_t(y,z))+\frac{\partial^2}{\partial y^2}(\frac{1}{2}\sigma^2(y,z)p_t(y,z))-\frac{\partial}{\partial z}\Theta_z p_t(y,z)\\
&+\int_{\mathbf{R}-\{0\}}[p_t(y+x,z)-p_t(y,z)]\nu'(dx)
\end{aligned}
\end{equation}

Similar as proof of Theorem \ref{thmstable}, we have \eqref{equ1} and \eqref{equ2} , plus $\lim\limits_{s\rightarrow \infty} p_s(x,t)=0$ and $p_0(x,t)=\mathbbm{1}_{(0,0)}(x,t)$,

\begin{equation}
\begin{aligned}
\frac{\partial}{\partial t}q(x,t)=&\int_0^\infty\left[-\frac{\partial}{\partial x}(F(x,t)p_s(x,t))+\frac{\partial^2}{\partial x^2}(\frac{1}{2}\sigma^2(x,t)p_s(x,t))-\frac{\partial}{\partial s}p_s(x,t)\right]ds\\
&+\int_0^\infty \int_{\mathbf{R}-\{0\}}[p_s(x+r,t)-p_s(x,t)]\nu'(dr)ds\\
=&-\frac{\partial}{\partial x}\left[F(x,t)\int_0^\infty p_s(x,t)ds\right]+\frac{\partial^2}{\partial x^2}\left[\frac{1}{2}\sigma^2(x,t)\int_0^\infty p_s(x,t)ds\right]\\
&+\int_{\mathbf{R}-\{0\}}\left[\int_0^{\infty}p_s(x+r,t)ds-\int_0^{\infty}p_s(x,t)ds\right]\nu'(dr)\\
=&-\frac{\partial}{\partial x}\left[F(x,t)\int_0^\infty p_s(x,t))ds\right]+\frac{\partial^2}{\partial x^2}\left[\frac{1}{2}\sigma^2(x,t)\int_0^\infty p_s(x,t)ds\right]+T^s\int_0^{\infty}p_s(x,t)ds\\
=&-\frac{\partial}{\partial x}\left[F(x,t)\Phi_t w(x,t)\right]+\frac{\partial^2}{\partial x^2}\left[\frac{1}{2}\sigma^2(x,t)\Phi_t q(x,t)\right]+T^s\Phi_t q(x,t)\\
=&\left[-\frac{\partial}{\partial x} F(x,t)+\frac{1}{2}\frac{\partial^2}{\partial x^2}\sigma^2(x,t) +T^s\right]\Phi_t q(x,t)
\end{aligned}
\end{equation}

\end{proof}

\begin{rk}
\eqref{stable} is a special case of \eqref{symmetricFFPE} when L\'evy measure in Theorem \ref{thmsymmetric} is defined as in Definition \ref{stable-sde}.
\end{rk}

\subsection{Fractional Fokker-Planck equation with a general L\'evy generator}

Now we solve fractional Fokker-Planck equation associated with a general L\'evy process.
\begin{defin} \label{definitionoflevy}
Let $L(t)$ be a $ L\acute{e}vy \  process$ satisfying the following SDE
\begin{equation}
dL(t)=\int_B x\tilde{N} (dt,dx)+\int_{B^c} xN(dt,dx)
\end{equation} with
L\'evy symbol $\eta(u)=\int_{\mathbf{R}} [e^{iux}-1-iux \mathbbm{1}_B(x)]\nu(dx)$, where $\nu$ is a  $L\acute{e}vy $ measure.\\

\end{defin}

\begin{tm}\label{levythm}
Suppose that the standard Brownian motion $B(t)$,  the L\'evy process $L(t)$ as defined in definition \ref{definitionoflevy} and subordinator $T_\Psi(t)$ are independent, where $T_\Psi(t)$ has Laplace exponent $\Psi(u)$ and its inverse $S_\Psi(t)$. Let $Y(t)$ be the solution of the stochastic equation

\begin{equation}
\begin{aligned}
dY(t)&=F(Y^-(t),T^-_\Psi(t))dt+\sigma(Y^-(t),T^-_\Psi(t))dB(t)+h(Y^-(t),T^-_\Psi(t))dL(t), t\geq 0,\\
Y(0)&=0,\ T_\Psi(0)=0
\end{aligned}
\end{equation}
where the function $F(x,t),\ \sigma(x,t)\in C^2(\mathbf{R}^2),\ h(x,t)\in C^\infty(\mathbf{R}^2)$ satisfy the Lipschitz condition.
Assume that the PDF of the process $(Y(t),T_\Psi(T)),\ p_t(y,z) $ exists. Furthermore, we assume that $\frac{\partial}{\partial t}p_t(y,z)$, $\frac{\partial^k}{\partial y^k}p_t(y,z),\ k\in \mathbb{N}^+$ exist,
\begin{equation}
\int_0^{t_0}\int_0^{\infty}\left|\frac{\partial}{\partial t}p_s(x,t)\right|dsdt<\infty
\end{equation}
for each $t_0>0$,
\begin{equation}
\int_{x_1}^{x_2}\int_0^{\infty}\left|\frac{\partial}{\partial x}p_s(x,t)\right|dsdx<\infty,
\int_{x_1}^{x_2}\int_0^{\infty}\left|\frac{\partial^2}{\partial x^2}p_s(x,t)\right|dsdx<\infty,\\
\end{equation}
for each $x_1, x_2\in R$ and
\begin{equation}
\int_0^{\infty}\int_{\mathbf{R}-\{0\}}\sum_{k=1}^{\infty}\left|\frac{(-r)^k}{k!}\frac{\partial^k}{\partial x^k} (p_s(x,t)h(x,t)^k)\right|\nu(dr)ds<\infty.
\end{equation}
for each $x\in \mathbf{R}$.

Then, the PDF of the process $X(T)=Y^-(S_\Psi(t))$ is the weak solution of fractional Fokker-Planck equation
\begin{equation}\label{FFPE}
\frac{\partial}{\partial t}q(x,t)=\left[-\frac{\partial}{\partial x}F(x,t)+\frac{\partial^2}{\partial x^2}\frac{1}{2}\sigma^2(x,t)+T^{l+}\right]\Phi_t q(x,t),\\
\end{equation}
with $q(x,0)=\delta(x)$, where
\begin{equation}
T^{l+} f(x,t)=\int_{\mathbf{R}-\{0\}}\left[ \sum_{k=1}^{\infty}\frac{(-r)^k}{k!}\frac{\partial^k}{\partial x^k}(h(x,t)^k f(x,t) )+\mathbbm{1}_B(r,t)r\frac{\partial}{\partial x}(h(x,t)f(x,t))\right]\nu(dr),
\end{equation}
for any $f(x,t)\in C_0^\infty(R^2)$.
\end{tm}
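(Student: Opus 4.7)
The plan is to follow the three-step template of the proof of Theorem \ref{thmstable} verbatim, changing only the computation of the Hermitian adjoint of the jump generator. Introduce the two-dimensional process $(Y(t),Z(t))$ with $Z(t)=T_\Psi(t)$, write its generator as $\Gamma=A+T^l$, where $A$ collects the drift, diffusion and subordinator pieces (so $A^+p_t$ is identical to the expression obtained in the proof of Theorem \ref{thmstable}) and
\begin{equation*}
T^l f(y,z)=\int_{\mathbf{R}-\{0\}}\bigl[f(y+xh(y,z),z)-f(y,z)-\mathbbm{1}_B(x)\,xh(y,z)\,\tfrac{\partial}{\partial y}f(y,z)\bigr]\nu(dx).
\end{equation*}
Once $T^{l+}$ is shown to agree with the operator in the statement, the rest of the proof is literally the same as in Theorem \ref{thmstable}.

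To identify $T^{l+}$, I would pair $T^l f$ with $p_t(y,z)$ and expand the shift in a Taylor series in the first slot,
\begin{equation*}
f(y+xh(y,z),z)-f(y,z)=\sum_{k=1}^{\infty}\frac{(xh(y,z))^k}{k!}\frac{\partial^k f}{\partial y^k}(y,z),
\end{equation*}
then integrate by parts $k$ times in $y$. The boundary terms vanish because $f\in C_0^\infty$, each $k$-fold integration by parts produces a factor $(-1)^k$ and moves all derivatives onto $h(y,z)^k p_t(y,z)$, while the $\mathbbm{1}_B$-correction contributes a single $+\mathbbm{1}_B(x)\,x\,\tfrac{\partial}{\partial y}(h p_t)$ after one integration by parts. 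Reassembling gives
\begin{equation*}
T^{l+}p_t(y,z)=\int_{\mathbf{R}-\{0\}}\Bigl[\sum_{k=1}^{\infty}\frac{(-x)^k}{k!}\frac{\partial^k}{\partial y^k}(h^k p_t)+\mathbbm{1}_B(x)\,x\,\frac{\partial}{\partial y}(h p_t)\Bigr]\nu(dx),
\end{equation*}
which is precisely $T^{l+}$ as written in the theorem.

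With $\frac{\partial}{\partial t}p_t=\Gamma^+p_t=A^+p_t+T^{l+}p_t$ in hand, the remainder is an exact reprise of the end of the proof of Theorem \ref{thmstable}: starting from $q(x,t)=\int_0^\infty \Theta_t p_s(x,t)\,ds$, justify moving $\frac{\partial}{\partial t}$ inside the $s$-integral using \eqref{condition11} and the uniform bound $\int_0^{t_0}G(u)\,du\le K$; use $\Theta_t^{-1}=\Phi_t$ to get $\int_0^\infty p_s(x,t)\,ds=\Phi_t q(x,t)$; then collect terms using $\lim_{s\to\infty}p_s=0$ and $p_0=\mathbbm{1}_{(0,0)}$ to factor $\Phi_t q(x,t)$ out of each piece, producing \eqref{FFPE}.

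The main obstacle is the infinite-series step. A $C_0^\infty$ function is typically not real-analytic, so pointwise convergence of the Taylor expansion of $f(y+xh(y,z),z)$ for all $x\in\mathbf{R}$ is not automatic, and one has to be careful about what ``weak solution'' means. The natural route is to test against suitable analytic test functions, or equivalently to interpret the representation formally and justify only its weak pairing: the stated integrability hypothesis on $\sum_{k\ge 1}\frac{|r|^k}{k!}\bigl|\tfrac{\partial^k}{\partial x^k}(p_s h^k)\bigr|$ together with Fubini makes the successive exchanges of $\sum$, $\int_{\mathbf{R}-\{0\}}\nu(dx)$ and $\iint dy\,dz$ legitimate, and combined with smoothness of $h$ this controls the remainder of the Taylor expansion uniformly in the domain of integration. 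Once this analytic bookkeeping is settled, every other step is a line-for-line transfer from Theorems \ref{thmstable} and \ref{thmsymmetric}.
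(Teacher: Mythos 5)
Your proposal is correct and takes essentially the same approach as the paper: the paper's proof simply cites equation (32) of Sun and Duan \cite{sun} for the adjoint $T^{l+}$ (which is obtained there by exactly your Taylor-expansion-plus-repeated-integration-by-parts computation) and then reruns the machinery of Theorem \ref{thmstable} verbatim, which is what you do. Your closing caveat about non-analytic $C_0^\infty$ test functions and the convergence of the series is a fair flag, but the paper operates at the same level of formality, relying on the stated summability hypothesis and the citation to \cite{sun} rather than addressing it further.
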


\begin{rk}
The operator $T^{l+} $ appears naturally in Sun and Duan \cite{sun}. Theorem 2-14 in \cite{bit} guarantees the existence of density $p_t(y,z)$ by requiring some regularity on coefficients as follows, $F(x,t)$, $\sigma(x,t)$, $h(x,t)\in C^3_b(\mathbf{R}^2)$ have bounded partial derivatives from order 0 to 3, $\sup\limits_x|D^n_x(xh(x,t))|\in L^p(\mu,\nu)$ for all $p\geq2$.
\end{rk}

\begin{proof}[Proof of Theorem \ref{levythm}]
We follow the same steps as in the proof of Theorem \ref{thmstable}, and modify the part related to $T_\alpha$.

Let  $L=(L(t),t\geq 0)$ be a L\'evy process as defined in definition \ref{definitionoflevy}, it has infinitesimal generator
\begin{equation}
T^l f(y,z)=\int_{\mathbf{R}-\{0\}}\left[f(y+xh(y,z),z)-f(y,z)+\mathbbm{1}_B(x,z)xh(y,z)\frac{\partial}{\partial y}f(y,z)\right]\nu(dx)
\end{equation}

By equation (32) in \cite{sun},
\begin{equation}
T^{l+}p_t(y,z)=\int_{\mathbf{R}-\{0\}}\left[ \sum_{k=1}^{\infty}\frac{(-x)^k}{k!}\frac{\partial^k}{\partial y^k}(h(y,z)^kp_t(y,z)) +\mathbbm{1}_B(x,z)x\frac{\partial}{\partial y}(h(y,z)p_t(y,z))\right]\nu(dx)
\end{equation}

Since $\Gamma^+=A^++T^{l+}$ and $\frac{\partial}{\partial t}p_t(y,z)=\Gamma^+p_t(y,z)$, we have
\begin{equation}
\begin{aligned}
\frac{\partial}{\partial t}p_t(y,z)=&-\frac{\partial}{\partial y}(F(y,z)p_t(y,z))+\frac{\partial^2}{\partial y^2}\left[\frac{1}{2}\sigma^2(y,z)p_t(y,z)\right]-\frac{\partial}{\partial z}\Theta_z p_t(y,z)\\
&+\int_{\mathbf{R}-\{0\}}\left[ \sum_{k=1}^{\infty}\frac{(-x)^k}{k!}\frac{\partial^k}{\partial y^k}(h(y,z)^kp_t(y,z)) +\mathbbm{1}_B(x,z)x\frac{\partial}{\partial y}(h(y,z)p_t(y,z))\right]\nu(dx)
\end{aligned}
\end{equation}

Similar as proof of Theorem \ref{thmstable}, we can derive \eqref{equ1} and \eqref{equ2}, combined with $\lim\limits_{s\rightarrow \infty} p_s(x,t)=0$ and $p_0(x,t)=\mathbbm{1}_{(0,0)}(x,t)$,

\begin{equation}
\begin{aligned}
\frac{\partial}{\partial t}w(x,t)=&\int_0^\infty\left[ (-\frac{\partial}{\partial x}(F(x,t)p_s(x,t))+\frac{\partial^2}{\partial x^2}\left(\frac{1}{2}\sigma^2(x,t)p_s(x,t)\right)-\frac{\partial}{\partial s}p_s(x,t)\right.\\
&\left.+\int_{\mathbf{R}-\{0\}}\left( \sum_{k=1}^{\infty}\frac{(-r)^k}{k!}\frac{\partial^k}{\partial x^k}(h(x,t)^kp_s(x,t)) +\mathbbm{1}_B(r,t)r\frac{\partial}{\partial x}(h(x,t)p_s(x,t))\right)\nu(dr)\right]ds\\
=&-\frac{\partial}{\partial x}\left[F(x,t)\int_0^{\infty} p_s(x,t)ds\right]+\frac{\partial^2}{\partial x^2}\left[\frac{1}{2}\sigma^2(x,t)\int_0^\infty p_s(x,t)ds\right]\\
&+\int_{\mathbf{R}-\{0\}}\left[ \sum_{k=1}^{\infty}\frac{(-r)^k}{k!}\frac{\partial^k}{\partial x^k}\left(h(x,t)^k \int_0^{\infty}p_s(x,t)ds\right) +\mathbbm{1}_B(r,t)r\frac{\partial}{\partial x}\left(h(x,t)\int_0^{\infty}p_s(x,t)ds\right)\right]\nu(dr)\\
=&\left[-\frac{\partial}{\partial x}F(x,t)+\frac{\partial^2}{\partial x^2}\frac{1}{2}\sigma^2(x,t)\right]\Phi_t q(x,t)\\
&+\int_{\mathbf{R}-\{0\}}\left[ \sum_{k=1}^{\infty}\frac{(-r)^k}{k!}\frac{\partial^k}{\partial x^k}(h(x,t)^k \Phi_t q(x,t) )+\mathbbm{1}_B(r,t)r\frac{\partial}{\partial x}(h(x,t)\Phi_t q(x,t))\right]\nu(dr)\\
=&\left[-\frac{\partial}{\partial x}(F(x,t)+\frac{\partial^2}{\partial x^2}\frac{1}{2}\sigma^2(x,t)+T^l\right]\Phi_t w(x,t)
\end{aligned}
\end{equation}
\end{proof}

\begin{rk}\label{reduced}
In Theorem \ref{levythm}, when the space-time-dependent coefficient $h(x,t)$ of pure jump L\'evy process only depends on time $t$, say $h(t)$, then
\begin{equation}
\begin{aligned}
T^{l+}\Phi_t q(x,t)&=\int_{\mathbf{R}-\{0\}}\left[ \sum_{k=1}^{\infty}\frac{(-r)^k}{k!}\frac{\partial^k}{\partial x^k}(h(t)^k \Phi_t q(x,t ))+\mathbbm{1}_B(r,t)r\frac{\partial}{\partial x}(h(t)\Phi_t q(x,t))\right]\nu(dr)\\
&=\int_{\mathbf{R}-\{0\}}\left[ \sum_{k=1}^{\infty}\frac{(-rh(t))^k}{k!}\frac{\partial^k}{\partial x^k}(\Phi_t q(x,t ))+\mathbbm{1}_B(r,t)rh(t)\frac{\partial}{\partial x}\Phi_t q(x,t)\right]\nu(dr)\\
&=\int_{\mathbf{R}-\{0\}}\left[ \Phi_t q(x-rh(t),t)-\Phi_t q(x,t)+\mathbbm{1}_B(r,t)rh(t)\frac{\partial}{\partial x}\Phi_t q(x,t)\right]\nu(dr),
\end{aligned}
\end{equation}
thus, \eqref{FFPE} becomes
\begin{equation}
\begin{aligned}
\frac{\partial}{\partial t}q(x,t)=&\left[-\frac{\partial}{\partial x}(F(x,t)+\frac{\partial^2}{\partial x^2}\frac{1}{2}\sigma^2(x,t)\right]\Phi_t q(x,t),\\
&+\int_{\mathbf{R}-\{0\}}\left[ \Phi_t q(x-rh(t),t)-\Phi_t q(x,t)+\mathbbm{1}_B(r,t)rh(t)\frac{\partial}{\partial x}\Phi_t q(x,t)\right]\nu(dr),
\end{aligned}
\end{equation}
which corresponds \eqref{magequ}.
\end{rk}

\begin{rk}
The methods that Magdziarz and Zorawik used to calculate the adjoint of infinitesimal generator of the L\'evy process when the coefficient of the L\'evy noise depends only on time is substitution and integration by parts, (2.11) in \cite{mazo}. Such a method does not work when coefficient of L\'evy noise depends on both time and space; however, using the self-adjointness of the infinitesimall generator of symmetric L\'evy process and the method in \cite{sun} for general L\'evy process, we can figure out the adjoint operator for L\'evy noise with space-time-dependent coefficients. Thus, Theorem \ref{levythm} extends \cite{mazo}  and provides stochastic solution of fractional Fokker-Plank equation \eqref{FFPE} describing subdiffusion in full generality.
\end{rk}

\begin{rk}
Simulations of paths of stochastic processes play an important role in applications. Results in this paper provide a useful way for obtaining approximate solutions of fractional Fokker-Planck equations mentioned above.
Using Monte Carlo methods based on realization of $X(t)$, our results can be used to approximate solutions of fractional Fokker-Planck equations \eqref{stable}, \eqref{symmetricFFPE}, and \eqref{FFPE}, see \cite{jawe},  \cite{magd}, \cite{magdl}, \cite{pot}. Also our results can be used to obtain solution of equations with particle tracking methods, see \cite{beme}, \cite{chme}, \cite{mezh}.
\end{rk}

\end{document}